\renewcommand\eqref[1]{(\ref{#1})} %Need with hyperref
\numberwithin{equation}{section}
\theoremstyle{plain}
\newtheorem{thm}{Theorem}[section]
\newtheorem{lem}[thm]{Lemma}
\theoremstyle{definition}
\newtheorem{defn}[thm]{Definition}
\newtheorem{rem}[thm]{Remark}
\begin{document}
\title[Geometric maximizers of Schatten norms]
{Geometric maximizers of Schatten norms of some convolution type integral operators}

\author[Michael Ruzhansky]{Michael Ruzhansky}
\address{
	Michael Ruzhansky:
	\endgraf
	Department of Mathematics
	\endgraf
	Imperial College London
	\endgraf
	180 Queen's Gate, London SW7 2AZ
	\endgraf
	United Kingdom
	\endgraf
	{\it E-mail address} {\rm m.ruzhansky@imperial.ac.uk}
}
\author[Durvudkhan Suragan]{Durvudkhan Suragan}
\address{
  Durvudkhan Suragan:
   \endgraf
  Institute of Mathematics and Mathematical Modeling
  \endgraf
  125 Pushkin str., 050010 Almaty, Kazakhstan
  \endgraf
and 
  \endgraf
  RUDN University 
  \endgraf
  6 Miklukho-Maklay St, Moscow, 117198, Russia
  \endgraf
  {\it E-mail address} {\rm suragan@list.ru}}

\thanks{The first author was supported in parts by the EPSRC
	grant EP/K039407/1 and by the Leverhulme Grants RPG-2014-02 and RPG-2017-151. The second author was supported by the Ministry of Education and Science of the Russian Federation (the Agreement number 02.a03.21.0008). 
	No new data was collected or generated during the course of this research.}

 \keywords{integral operators, singular value, Schatten class,
 geometric maximizer}
 \subjclass{35P99, 47G40, 35S15}

\begin{abstract}
In this paper we prove that the ball is a maximizer of the Schatten $p$-norm
of some convolution type integral operators with non-increasing kernels among all domains of a given measure in $\mathbb R^{d}$.
We also show that the equilateral triangle has the largest Schatten $p$-norm among all triangles of a given area. 
Some physical motivations for our results are also presented.
\end{abstract}
     \maketitle
\section{Introduction}
\label{intro}

Let $\Omega\subset \mathbb R^{d}$ be an open bounded set.
We consider the integral operator $\mathcal{K}_{\Omega}:L^{2}(\Omega)\rightarrow L^{2}(\Omega)$ defined by
\begin{equation}
\mathcal{K}_{\Omega}f(x):= \int_{\Omega}K(|x-y|)f(y)dy,\quad f\in L^{2}(\Omega),
\label{3}
\end{equation}
which we assume to be compact.
Throughout this paper we assume that the kernel $K(|x|)$ is
(a member of $L^{1}_{loc}(\mathbb R^{d})$) real, positive and non-increasing, i.e.
that the function $K:[0,\infty)\to \mathbb R$ satisfies
\begin{equation}
K(\rho)>0\quad  \textrm{ for any } \quad \rho\geq 0,
\label{n1}
\end{equation}
and
\begin{equation}\label{n1a}
K(\rho_{1})\geq K(\rho_{2})\quad  \textrm{ if } \quad \rho_{1}\leq \rho_{2}.
\end{equation}

Since $K$ is a real and symmetric function, $\mathcal{K}_{\Omega}$ is a self-adjoint operator.
Therefore, all of its eigenvalues and characteristic numbers are real.
We recall that the characteristic numbers are the inverses of the eigenvalues.
Usually,  examples of operators $\mathcal{K}_{\Omega}$ often appear as solutions to differential equations. For example, the Peierls integral operator (also discussed in \eqref{eq19}), namely the operator
$$
\mathcal{P}_{\Omega}f(x)=\int_{\Omega}\frac{1}{4\pi}\frac{e^{-|x-y|}}{|x-y|^{2}}f(y)dy,\quad f\in L^{2}(\Omega),\;
\Omega\subset \mathbb R^{3},
$$
appears as the inverse to the one-speed neutron transport equation in $\Omega$. In these way,
the eigenvalues of the differential operator 
correspond to characteristic numbers of operators
$\mathcal{K}_{\Omega}$.

The characteristic numbers of $\mathcal{K}_{\Omega}$ may be enumerated in ascending
order of their modulus,
\begin{equation}\label{EQ:ordering}
|\mu_{1}(\Omega)|\leq|\mu_{2}(\Omega)|\leq\ldots,
\end{equation}
where $\mu_{i}(\Omega)$ is repeated in this series according to its multiplicity. We denote the
corresponding eigenfunctions by $u_{1}, u_{2},...,$ so that for each characteristic number $\mu_{i}$ there is a unique corresponding (normalised) eigenfunction $u_{i}$,
$$u_{i}=\mu_{i}(\Omega)\mathcal{K}_{\Omega}u_{i},\,\,\,\, i=1,2,\ldots.$$

By using the Feynman-Kac formula and spherical rearrangement Luttinger \cite{Lu} proved that the ball $B$ is a maximizer
of the partition function of the Dirichlet Laplacian among all domains of the same volume as $B$ for all positive values of time, i.e.
$$
\sum_{i=1}^{\infty}\exp(-t\mu_{i}^{D}(\Omega))\leq \sum_{i=1}^{\infty}
\exp(-t\mu_{i}^{D}(B)),\,\,\forall t>0 ,\,\,|\Omega|=|B|,
$$
where $\mu_{i}^{D}, i=1,2,\ldots,$ are the eigenvalues of the Dirichlet Laplacian.
From this, by using the Mellin transform one obtains
\begin{equation}\sum_{i=1}^{\infty}\frac{1}{[\mu_{i}^{D}(\Omega)]^{p}}\leq \sum_{i=1}^{\infty}\frac{1}{[\mu_{i}^{D}(B)]^{p}},\,\,
|\Omega|=|B|, \label{eq5}\end{equation}
when $p>d/2$, $\Omega\subset \mathbb R^{d}$.
See also Luttinger \cite{Lu1}.
We prove an analogue of this Luttinger's inequality for the integral operator $\mathcal{K}_{\Omega}$.

In addition to this analysis we also look at the isoperimetric inequalities in the case when
$\Omega$ ranges over polygons, more specifically, over the triangles. In the Dirichlet Laplacian case such problems have been treated by P{\'o}lya \cite{Po}, see also the exposition by P{\'o}lya and Szeg{\"o} \cite{Po1}.
Summarising our main results for operators $\mathcal{K}_{\Omega}$ with an additional condition on the kernel in both cases of arbitrary or polygonal domains, we prove the following facts:

\begin{itemize}
\item the $p$-Schatten norm of (positive) $\mathcal{K}_{\Omega}$ is maximized on the ball among all domains
of a given measure;
\item  the $p$-Schatten norm of (positive) $\mathcal{K}_{\Omega}$ is maximized on the equilateral triangle
among all triangles of a given measure.
\end{itemize}

For the logarithmic potential operator, i.e. for the operator
\begin{equation*}
\mathcal{L}_{\Omega}f(x)= \int_{\Omega}\frac{1}{2\pi}\ln\frac{1}{|x-y|}f(y)dy,\quad f\in L^{2}(\Omega),
\end{equation*}
for $\Omega\subset\mathbb R^{2}$, some results analogous to those of the present
paper have been obtained in \cite{Ruzhansky-Suragan:JMAA}, for the Riesz potential operator in \cite{Ruzhansky-Suragan:Newton} as well as for more general convolution type operators \cite{Ruzhansky-Suragan:UMN}. However, all previous papers on Schatten $p$-norms were only for integer values of $p$, therefore,   
the setting of this paper is different, that is, in the present paper in order to obtain geometric maximizer results for non-integer $p$ we define a special class of convolution type operators allowing for deeper analysis based on specific properties of
the kernel. We also note the related works \cite{Zoal1} and \cite{Zoal2} on isoperimetric inequalities for the logarithmic and more general potential operators.

In Section \ref{SEC:result} we present the main result of this paper.
Its proof will be given in Section \ref{SEC:proof}.
In Section \ref{SEC:6} we discuss briefly isoperimetric inequalities for polygons.

\medskip

The authors would like to thank Grigori Rozenblum for useful discussions on an early version of this paper.

\section{Main result and examples}
\label{SEC:result}

Let $H$ be a separable Hilbert space.
We denote the class of compact operators $P:H\rightarrow H$ by $\mathcal{S}^{\infty}(H)$.
Recall that the singular values $\{s_{n}\}$ of $P\in \mathcal{S}^{\infty}(H)$
are the eigenvalues of the positive operator $(P^{*}P)^{1/2}$ (see e.g.
Gohberg and Krein \cite{GK}).
The Schatten $p$-classes are defined as
$$\mathcal{S}^{p}(H):=\{P \in \mathcal{S}^{\infty}(H): \{s_{n}\}\in \ell^{p}\},\quad 1\leq p<\infty.$$
In $\mathcal{S}^{p}(H)$ the Schatten $p$-norm of the operator $P$ is defined as
\begin{equation}
\| P\|_{p}:=\left(\sum_{n=1}^{\infty}s_{n}^{p}\right)^{\frac{1}{p}}, \quad 1\leq p <\infty.\label{1}
\end{equation}
For $p=\infty$, we can set
$$
\| P\|_{\infty}:=\| P\|
$$
to be the operator norm of $P$ on $H$.
It is well known that for the positive self-adjoint operators the singular values are equal to the eigenvalues, and the corresponding eigenfunctions form a complete orthogonal basis on $H$.
As outlined in the introduction, we assume that $\Omega\subset \mathbb R^{d}$
is an open bounded set, and we consider compact integral operators
on $L^{2}(\Omega)$ of the form
\begin{equation}
\mathcal{K}_{\Omega}f(x)=\int_{\Omega}K(|x-y|)f(y)dy,\quad f\in L^{2}(\Omega),
\label{n16}
\end{equation}
where the kernel $K$ is real, positive and non-increasing, that is, $K$
satisfies \eqref{n1} and \eqref{n1a}. This implies that it
does not change its formula under the symmetric-decreasing rearrangement, see e.g.
Lieb and Loss \cite{LL}.

By $|\Omega|$ we will denote the Lebesque measure of $\Omega$.

The following analogue of the Rayleigh-Faber-Krahn inequality for the integral operator $\mathcal{K}_\Omega$ was given in \cite{Ruzhansky-Suragan:UMN} (see also \cite{Ruzhansky-Suragan:Newton} and \cite{Ruzhansky-Suragan:BMS}).

\begin{thm} \label{THM:1}
The ball $B$ is a minimizer of the first characteristic number (by modulus) of the
convolution type compact operator $\mathcal{K}_\Omega$ among all domains of
a given measure, i.e.
\begin{equation}\label{EQ:mu1}
\mu_{1}(B)\leq \mu_{1}(\Omega)
\end{equation}
for an arbitrary bounded open domain $\Omega\subset \mathbb R^{d}$ with $|\Omega|=|B|.$
\end{thm}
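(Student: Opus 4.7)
The plan is to combine the variational (Rayleigh) characterization of the largest eigenvalue of $\mathcal{K}_\Omega$ with the Riesz rearrangement inequality. Because $K$ is real, positive and symmetric, $\mathcal{K}_\Omega$ is compact, self-adjoint and positivity-preserving on $L^{2}(\Omega)$, so its largest eigenvalue equals $1/\mu_{1}(\Omega)$ and admits the characterization
\begin{equation*}
\frac{1}{\mu_{1}(\Omega)} \;=\; \sup_{f\in L^{2}(\Omega),\, f\not\equiv 0}\;\frac{\langle \mathcal{K}_\Omega f,f\rangle_{L^{2}(\Omega)}}{\|f\|_{L^{2}(\Omega)}^{2}}.
\end{equation*}

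First I would verify that a corresponding eigenfunction $u_{1}$ can be chosen non-negative: since $K(|x-y|)>0$ by \eqref{n1}, replacing $u_{1}$ by $|u_{1}|$ cannot decrease the Rayleigh quotient, so an extremiser exists in the positive cone. I would then extend $u_{1}$ by zero outside $\Omega$ and form its symmetric decreasing rearrangement $u_{1}^{*}$, which is supported in the ball $B$ with $|B|=|\Omega|$. Applying Riesz's rearrangement inequality to the three non-negative functions $u_{1}$, $K(|\cdot|)$, $u_{1}$ gives
\begin{equation*}
\int_{\mathbb{R}^{d}}\!\!\int_{\mathbb{R}^{d}} K(|x-y|)\,u_{1}(x)\,u_{1}(y)\,dx\,dy \;\leq\; \int_{\mathbb{R}^{d}}\!\!\int_{\mathbb{R}^{d}} K^{*}(|x-y|)\,u_{1}^{*}(x)\,u_{1}^{*}(y)\,dx\,dy,
\end{equation*}
and since the monotonicity hypothesis \eqref{n1a} makes $K$ coincide with its own symmetric decreasing rearrangement $K^{*}$, only $u_{1}$ gets replaced by $u_{1}^{*}$ on the right-hand side. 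Rearrangement preserves the $L^{2}$-norm, so the Rayleigh quotient does not decrease, and using $u_{1}^{*}$ as a trial function for the variational problem on $B$ yields
\begin{equation*}
\frac{1}{\mu_{1}(\Omega)} \;\leq\; \frac{\langle \mathcal{K}_{B} u_{1}^{*},\,u_{1}^{*}\rangle_{L^{2}(B)}}{\|u_{1}^{*}\|_{L^{2}(B)}^{2}} \;\leq\; \frac{1}{\mu_{1}(B)},
\end{equation*}
which is exactly \eqref{EQ:mu1}.

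The main point requiring care is that the rearrangement argument must be applied to a genuine eigenfunction rather than a generic test function, which forces the preliminary sign control on $u_{1}$ (Riesz's inequality is formulated for non-negative inputs). Equally essential is the monotonicity assumption \eqref{n1a}, which is what guarantees $K=K^{*}$ and hence that the symmetrized Rayleigh quotient describes an operator on $B$ of exactly the same form as $\mathcal{K}_\Omega$; without \eqref{n1a} one would obtain a comparison between $\mathcal{K}_\Omega$ and a different operator on $B$ built from $K^{*}$.
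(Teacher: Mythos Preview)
Your argument is correct and is precisely the standard Rayleigh--Faber--Krahn strategy for such operators. Note that the present paper does not actually reprove Theorem~\ref{THM:1}; it quotes the result from earlier work and only spells out the analogous triangle case (Theorem~\ref{THM:2}), whose proof is the Steiner-symmetrization version of exactly what you wrote: positivity of the first eigenfunction, a rearrangement inequality for the quadratic form, and the variational principle. The one cosmetic difference is that the paper applies the variational principle to $\mathcal{K}_\Omega^{2}$ rather than to $\mathcal{K}_\Omega$, which makes the operator automatically non-negative and so avoids the separate step of checking that the eigenvalue of largest modulus is positive; your route via the inequality $\langle\mathcal{K}_\Omega u,u\rangle\le\langle\mathcal{K}_\Omega|u|,|u|\rangle$ (equivalently, the paper's appeal to Jentsch's theorem in Remark~\ref{REM:op}) handles this just as well.
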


\begin{rem}\label{REM:op}
In other words Theorem \ref{THM:1} says that the operator norm
$\|\mathcal{K}_\Omega\|$ is maximized in a ball among all Euclidean domains of a given volume. We also note that since $\mu_{1}$ is the
characteristic number with the smallest modulus according to the ordering \eqref{EQ:ordering}, actually $\mu_{1}$ is positive.
Since the integral kernel of $\mathcal{K}$ is positive, the statement, sometimes called Jentsch's theorem, applies, see, e.g.,  \cite{RSV4}: the characteristic number $\mu_{1}$ of the convolution type compact operator $\mathcal{K}$ with the smallest modulus is positive and simple; the corresponding eigenfunction $u_{1}$ can be chosen positive. Therefore,  \eqref{EQ:mu1} is the inequality between positive numbers.
\end{rem}

\medskip

We now define a $\zeta$-kernel of the operator $\mathcal{K}_{\Omega}$: we say that
$\mathcal{K}_{\Omega}$ admits a $\zeta$-kernel if for all $\zeta\geq 0$ there exists
a real, positive and non-increasing function $K_{\zeta}(|x-y|)$, a member of $L^{1}_{loc}(\mathbb R^{d})$, such that
\begin{equation}\label{EQ:zk}
u_{i}(x)=(\mu_{i}+\zeta)\int_{\Omega}
K_{\zeta}(|x-y|)u_{i}(y)dy,\quad i=1,2,\ldots
\end{equation}
Here $\mu_{i}$ and $u_{i}$ are characteristic numbers and corresponding eigenfunctions of the operator $\mathcal{K}_{\Omega}$. Note that a real, positive and non-increasing function is called a symmetric-decreasing function.

\begin{thm} \label{THM:main}
Let $B$ be a ball centered at the origin and let $\Omega$
be a bounded open domain with $|\Omega|=|B|.$
Assume that the operators $\mathcal{K}_{\Omega}$ and $\mathcal{K}_{B}$ are positive,
admit $\zeta$-kernels, and $\mathcal{K}_{B}\in\mathcal{S}^{q}(L^{2}(B))$
for some $q>1$. Let $p_{0}$ be the smallest integer
$\geq q.$
Then
\begin{equation}
\|\mathcal{K}_{\Omega}\|_{p}\leq  \|\mathcal{K}_{B}\|_{p}
\label{3-2}
\end{equation}
for all $p_{0}\leq p< \infty$.
\end{thm}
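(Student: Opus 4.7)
The plan is to split into two cases, first proving the inequality for integer exponents by a trace formula plus Brascamp--Lieb--Luttinger rearrangement, and then bootstrapping to non-integer $p$ via a beta-function integral representation of $\mu^{-p}$ in combination with the $\zeta$-kernel assumption.

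\textbf{Step 1 (integer case).} Assume first that $p=k$ is an integer with $k\geq p_{0}$. Since $\mathcal{K}_{\Omega}$ is a positive compact self-adjoint operator, its singular values agree with its eigenvalues $\mu_{i}(\Omega)^{-1}$, so
\begin{equation*}
\|\mathcal{K}_{\Omega}\|_{k}^{k}=\operatorname{Tr}(\mathcal{K}_{\Omega}^{k})=\int_{\Omega^{k}}K(|x_{1}-x_{2}|)K(|x_{2}-x_{3}|)\cdots K(|x_{k}-x_{1}|)\,dx_{1}\cdots dx_{k}.
\end{equation*}
Writing this as an integral over $(\mathbb{R}^{d})^{k}$ with the indicator $\chi_{\Omega}$ inserted in each slot, one can apply the multilinear rearrangement inequality of Brascamp--Lieb--Luttinger with the rearrangement $\chi_{\Omega}^{\ast}=\chi_{B}$; since each $K$ is already symmetric-decreasing by \eqref{n1} and \eqref{n1a}, no further rearrangement of the kernels is required. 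This yields $\operatorname{Tr}(\mathcal{K}_{\Omega}^{k})\leq \operatorname{Tr}(\mathcal{K}_{B}^{k})$, which is exactly \eqref{3-2} for integer $p=k$; in particular the right-hand side is finite because $\mathcal{K}_{B}\in\mathcal{S}^{q}(L^{2}(B))$ and $k\geq p_{0}\geq q$.

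\textbf{Step 2 (non-integer case).} Now let $p\geq p_{0}$ be non-integer and set $k:=\lceil p\rceil$, so $k$ is an integer with $k>p$ and $k\geq p_{0}\geq q$. The basic identity
\begin{equation*}
\mu^{-p}=\frac{1}{B(p,k-p)}\int_{0}^{\infty}\zeta^{k-p-1}(\mu+\zeta)^{-k}\,d\zeta,\qquad \mu>0,
\end{equation*}
(verified by the substitution $\zeta=\mu s$ and the beta function) lets me express the Schatten norm as
\begin{equation*}
\|\mathcal{K}_{\Omega}\|_{p}^{p}=\sum_{i}\mu_{i}(\Omega)^{-p}=\frac{1}{B(p,k-p)}\int_{0}^{\infty}\zeta^{k-p-1}\sum_{i}(\mu_{i}(\Omega)+\zeta)^{-k}\,d\zeta,
\end{equation*}
with the interchange justified by Tonelli's theorem since the integrand is non-negative. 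Now the $\zeta$-kernel hypothesis furnishes, for each $\zeta\geq 0$, an operator $\mathcal{K}_{\Omega,\zeta}$ with real, positive, non-increasing kernel $K_{\zeta}$ whose eigenfunctions are precisely the $u_{i}$ and whose characteristic numbers are $\mu_{i}(\Omega)+\zeta$. Thus $\mathcal{K}_{\Omega,\zeta}$ satisfies the very assumptions of Step 1, so I may apply the integer-exponent inequality (with exponent $k$) to it, obtaining
\begin{equation*}
\sum_{i}(\mu_{i}(\Omega)+\zeta)^{-k}=\|\mathcal{K}_{\Omega,\zeta}\|_{k}^{k}\leq \|\mathcal{K}_{B,\zeta}\|_{k}^{k}=\sum_{i}(\mu_{i}(B)+\zeta)^{-k}
\end{equation*}
for every $\zeta\geq 0$. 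Multiplying by $\zeta^{k-p-1}/B(p,k-p)$ and integrating over $\zeta\in(0,\infty)$ recovers \eqref{3-2} for the non-integer exponent $p$.

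\textbf{Main obstacle.} The delicate point is Step 1: verifying that the trace $\operatorname{Tr}(\mathcal{K}_{\Omega}^{k})$ really is a finite iterated integral of the indicated product of kernels (so that the multilinear rearrangement inequality applies in the required form), and, in Step 2, that the new operators $\mathcal{K}_{\Omega,\zeta}$ produced by the $\zeta$-kernel hypothesis actually fall under the Step-1 framework---i.e., they are compact, positive, self-adjoint and lie in $\mathcal{S}^{k}$. Compactness and membership in $\mathcal{S}^{k}$ follow since their eigenvalues $(\mu_{i}+\zeta)^{-1}$ are bounded by $\mu_{i}^{-1}$ and $\mathcal{K}_{B}\in\mathcal{S}^{q}\subset\mathcal{S}^{k}$; positivity and self-adjointness follow from the real, positive, symmetric form of $K_{\zeta}$. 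Once these ingredients are in place, the two steps combine cleanly to give the theorem.
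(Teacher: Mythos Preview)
Your proof is correct, and Step~1 coincides with the paper's Lemma~\ref{lem:2}. For the passage to non-integer $p$, however, you take a genuinely different and more economical route than the paper.

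The paper proceeds in four additional stages: it expands $\dfrac{2\mu_i}{\mu_i^{p-1}(\mu_i^{2}-\zeta^{2})}$ as a geometric series to obtain Lemma~\ref{lem:3}; it then argues that the inverse bilateral Laplace transform of this quantity yields the exponential inequality $\sum_i e^{-\mu_i t}/\mu_i^{p-1}(\Omega)\leq \sum_i e^{-\mu_i t}/\mu_i^{p-1}(B)$ (Lemma~\ref{lem:4}); the positivity needed for this inversion step is justified by Post's formula and Bryan's criterion (Theorem~\ref{bryan}, Lemma~\ref{pos:f}), which in turn rests on the shifted trace inequality $\sum_j \mu_j^{-(p-1)}(\mu_j+\zeta)^{-n}(\Omega)\leq \sum_j \mu_j^{-(p-1)}(\mu_j+\zeta)^{-n}(B)$ proved via the $\zeta$-kernel and Brascamp--Lieb--Luttinger (Lemma~\ref{+s}); finally, the Mellin transform converts the exponential inequality into the desired $\sum_i \mu_i^{-p}$ inequality for real $p$.

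Your argument bypasses the entire Laplace-transform apparatus. The beta-function identity $\mu^{-p}=B(p,k-p)^{-1}\int_0^\infty \zeta^{k-p-1}(\mu+\zeta)^{-k}\,d\zeta$ reduces the real-exponent inequality directly to the pointwise comparison $\sum_i(\mu_i(\Omega)+\zeta)^{-k}\leq \sum_i(\mu_i(B)+\zeta)^{-k}$, which is exactly the $n=k$, $p=1$ case of the paper's Lemma~\ref{+s} (or, as you phrase it, Step~1 applied to $\mathcal{K}_{\Omega,\zeta}$). Thus both proofs ultimately rest on the same ingredient---Brascamp--Lieb--Luttinger applied to the $\zeta$-kernel---but you use it once and integrate, whereas the paper uses it to verify a sign condition inside an inverse-Laplace argument and then applies a second integral transform. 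Your approach is shorter and avoids the delicate positivity discussion of Lemmas~\ref{lem:4} and~\ref{pos:f}; the paper's approach, on the other hand, produces the intermediate heat-trace-type inequality \eqref{n9}, which may be of independent interest.
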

Conditions for funding $q$ for which an integral operator belongs to $\mathcal{S}^{q}$ in terms of the regularity of its kernel were given, for example, in \cite{DR-JFA}, see also \cite{DR-CRAS}.

\subsection{An example}
If the inverse to the operator $\mathcal{K}_{\Omega}$ is an elliptic operator, then its $\zeta$-kernel may be found as the Laplace transform of the corresponding heat kernel (see \cite{dav}).
The $\zeta$-kernel assumption, of course, requires that the conditions for the positivity of the heat kernel are satisfied, see, e.g., Reed-Simon \cite{RSV4}, which, in particular, restricts this kind of examples to elliptic operators of order not higher than 2. However, many problems in mathematical physics are reducible to convolution type positive integral operators. Here we consider the one-speed neutron transport equation to illustrate an application of our results in mathematical physics.

We denote by $\psi(x, s)$ the flux of particles at the point $x$ in a steady state, moving in the direction $s = (s_{1},s_{2},s_{3})$, $|s|= 1$, with no sources. Then the function $\psi(x, s)$ satisfies the following integral-differential equation:

\begin{equation}\langle s,\nabla \psi\rangle +b \psi=\frac{\alpha h}{4\pi}\int_{S}\psi(x,\rho)d\rho,
\end{equation}
where $S$ is a surface of the unit sphere centered at 0 in $\mathbb R^{3}$, $\langle s,\nabla \psi\rangle=\sum_{i=1}^{3}s_{i}\frac{\partial \psi}{\partial x_{i}},$ $b= 1/l$, $l$ is the mean free path at the point $x$ and we assume that it is a constant $l>0$.
This is the one-speed neutron transport equation for stationary processes with isotropic scattering without sources.

For a complete description of the process of particle transport it is necessary to prescribe the behaviour of the flux of particles $\psi(x, s)$ on the boundary of this medium (boundary conditions).

Here, for convenience,  we assume that the domain $\Omega$ with boundary $\partial\Omega$, where the transport process is taking place, is convex. In this case a boundary condition of the
form
\begin{equation}\label{a2}
\psi(x, s)=0, \langle s,n_{x}\rangle <0,
\end{equation}
expresses the absence of a flux of particles incident on the domain $\Omega$ from the outside.
In the boundary condition \eqref{a2}, $n_{x}$ is a unit outer normal at $x$ and $\langle s,n_{x}\rangle$ is a scalar product.

Thus, we obtain the spectral problem

\begin{equation}\langle s,\nabla \psi\rangle +b \psi=\frac{\mu}{4\pi}\int_{S}\psi(x,\rho)d\rho
, \quad x\in\Omega\subset \mathbb R^{3}, \label{eq19}\end{equation}

\begin{equation}
\psi(x, s)=0, \langle s,n_{x}\rangle <0,x\in\partial\Omega,\label{eq20}
\end{equation}
where $\mu=\alpha h$.
It is known that the spectral problem \eqref{eq19}-\eqref{eq20} is equivalent to the Peierls integral equation (\cite{Vla}, for more details see also \cite{Vla1})

\begin{equation}u(x)=
\mu\int_{\Omega}\mathcal{P}(|x-y|)u(y)dy,\end{equation}
where $u(x)=\frac{1}{4\pi}\int_{S}\psi(x,\rho)d\rho$
and $\mathcal{P}(|x-y|)=\frac{1}{4\pi}\frac{e^{-b|x-y|}}{|x-y|^{2}}$ is the Peierls kernel.

Theorem \ref{THM:1} is valid for this operator. Moreover, one concludes that Theorem \ref{THM:1} can be interpreted in theory of one speed neutrons that a ball is a maximizer of the probability that the particles may be captured by the nucleus among all domains of a given volume in $\mathbb R^{3}$ (see \cite{Suragan}). In addition, since we have the following $\zeta$-kernel of the Peierls integral operator:
\begin{equation}\mathcal{P}_{\zeta}(|x-y|)=\frac{1}{4\pi}
\frac{e^{-(\zeta+b)|x-y|}}{|x-y|^{2}},\end{equation}
Theorem \ref{THM:main} applies in this case.

\section{Proof of Theorem \ref{THM:main}}
\label{SEC:proof}

For integer values of $p$ we obtain:
\begin{lem} \label{lem:2} If the Schatten $q$-norm of $\mathcal{K}_{B}$ is bounded in a ball $B\subset \mathbb R^{d},$
then
\begin{equation}
\sum_{j=1}^{\infty}\frac{1}{\mu_{j}^{p}(\Omega)}\leq
\sum_{j=1}^{\infty}\frac{1}{\mu_{j}^{p}(B)}\label{n7}
\end{equation}
for any integer $p\geq q$ and an arbitrary bounded open domain $\Omega$ with $|\Omega|=|B|.$
\end{lem}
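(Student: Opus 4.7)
My plan is to rewrite $\sum_j 1/\mu_j^p(\Omega)$ as a $p$-fold integral and then invoke a multilinear rearrangement inequality. Since $\mathcal{K}_\Omega$ is positive self-adjoint with eigenvalues $1/\mu_j(\Omega)$, for an integer $p\geq 1$ one has
$$
\sum_{j=1}^{\infty}\frac{1}{\mu_{j}^{p}(\Omega)}
=\mathrm{Tr}\bigl(\mathcal{K}_{\Omega}^{p}\bigr)
=\int_{(\mathbb{R}^{d})^{p}}\prod_{i=1}^{p}\chi_{\Omega}(x_{i})\prod_{i=1}^{p}K(|x_{i}-x_{i+1}|)\,dx_{1}\cdots dx_{p},
$$
with indices taken cyclically modulo $p$ and $\chi_\Omega$ the characteristic function of $\Omega$. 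The iterated-integral expression for the trace follows by computing the kernel $k_p$ of $\mathcal{K}_\Omega^p$ as a $(p-1)$-fold convolution of $K$ against itself over $\Omega$, and then using $\mathrm{Tr}(\mathcal{K}_\Omega^p)=\int_\Omega k_p(x,x)\,dx$. The hypothesis $\mathcal{K}_B\in\mathcal{S}^q$ gives $\mathcal{K}_B^p\in\mathcal{S}^1$ for integers $p\geq q$, so the right-hand side of \eqref{n7} is finite.

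With the integral representation in hand, I would apply the Brascamp--Lieb--Luttinger rearrangement inequality, the natural multilinear extension of the Riesz rearrangement inequality. The integrand is a product of $2p$ non-negative factors, each evaluated at a linear combination of the vector variables $x_1,\dots,x_p$, namely the coordinate $x_i$ itself (inside $\chi_\Omega$) or the difference $x_i-x_{i+1}$ (inside $K(|\cdot|)$). The function $K(|\cdot|)$ on $\mathbb{R}^d$ is already symmetric-decreasing by \eqref{n1} and \eqref{n1a}, while the symmetric-decreasing rearrangement of $\chi_\Omega$ is $\chi_B$, since $|\Omega|=|B|$. Brascamp--Lieb--Luttinger therefore yields
$$
\int_{(\mathbb{R}^{d})^{p}}\prod_{i=1}^{p}\chi_{\Omega}(x_{i})\prod_{i=1}^{p}K(|x_{i}-x_{i+1}|)\,dx
\leq \int_{(\mathbb{R}^{d})^{p}}\prod_{i=1}^{p}\chi_{B}(x_{i})\prod_{i=1}^{p}K(|x_{i}-x_{i+1}|)\,dx,
$$
and the right-hand side equals $\mathrm{Tr}(\mathcal{K}_B^p)=\sum_j 1/\mu_j^p(B)$, which is precisely \eqref{n7}.

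The step I expect to require the most care is the trace identity itself, since $K$ is only assumed positive, non-increasing and locally integrable, hence possibly unbounded at the origin, and Mercer's theorem does not apply directly. I would handle this by truncating the kernel as $K_N:=\min(K,N)$, noting that $K_N$ remains real, positive, non-increasing and now bounded, so the corresponding operator $\mathcal{K}_\Omega^{(N)}$ on the bounded set $\Omega$ has a bounded continuous kernel for which the iterated-integral trace identity is standard. Applying the Brascamp--Lieb--Luttinger step to $K_N$ and then letting $N\to\infty$ via monotone convergence, both on the multiple integral and on the eigenvalue sum (using monotonicity of the singular values under pointwise monotone limits of the kernel), delivers the claimed inequality in full generality. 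Once this is settled, the rearrangement step is entirely transparent.
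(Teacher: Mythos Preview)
Your proposal is correct and follows essentially the same route as the paper: express $\sum_j \mu_j^{-p}(\Omega)=\mathrm{Tr}(\mathcal{K}_\Omega^p)$ as the $p$-fold integral of a cyclic product of kernels, then apply the Brascamp--Lieb--Luttinger rearrangement inequality using that $K(|\cdot|)$ is symmetric-decreasing and $\chi_\Omega^* = \chi_B$. The paper simply cites the bilinear expansion of iterated kernels for the trace identity and invokes BLL directly; your added truncation argument for justifying the trace formula under the minimal hypotheses on $K$ is extra care the paper omits, but it does not change the strategy.
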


\begin{proof}[Proof of Lemma \ref{lem:2}]
By using bilinear expansion of the iterated kernels (see, for example, \cite{Vla}) we obtain
\begin{equation}
\sum_{j=1}^{\infty}\frac{1}{\mu_{j}^{p}(\Omega)}=\int_{\Omega}...
\int_{\Omega}K(|y_{1}-y_{2}|)...K(|y_{p}-y_{1}|)dy_{1}...dy_{p},\,\,\, p\geq q, \,\,
p\in \mathbb N.\label{eq15}
\end{equation}
Indeed this is the trace of the $p$-th power of the operator $\mathcal{K}_\Omega.$
It follows from the Brascamp-Lieb-Luttinger inequality \cite{BLL} (see also \cite{LF}) that
$$
\int_{\Omega}...
\int_{\Omega}K(|y_{1}-y_{2}|)...K(|y_{p}-y_{1}|)dy_{1}...dy_{p}\leq$$
\begin{equation}\int_{B}...
\int_{B}K(|y_{1}-y_{2}|)...K(|y_{p}-y_{1}|)dy_{1}...dy_{p},\label{eq16}
\end{equation}
which proves
\begin{equation}
\sum_{j=1}^{\infty}\frac{1}{\mu_{j}^{p}(\Omega)}\leq
\sum_{j=1}^{\infty}\frac{1}{\mu_{j}^{p}(B)},\,\,\, p\geq q,\,\,\, p\in \mathbb N,\label{eq17}
\end{equation}
for any bounded open domain $\Omega\subset \mathbb R^{d}$ with $|\Omega|=|B|$.
Here we have used that the kernel $K$ is a symmetric-decreasing function.
\end{proof}

Since by the assumption $\mathcal{K}_\Omega$ is a positive operator, we have
\begin{equation}
\mu_{i}=|\mu_{i}|,\quad i=1,2,\ldots.
\end{equation}
As a consequence of Lemma \ref{lem:2} we obtain:
\begin{lem} \label{lem:3}
If the Schatten $q$-norm of $\mathcal{K}_{B}$ is bounded in a ball $B\subset \mathbb R^{d},$
then we have
\begin{equation}\label{n8} \sum_{i=1}^{\infty}\frac{2|\mu_{i}(\Omega)|}{|\mu_{i}(\Omega)|^{p-1}
(|\mu_{i}(\Omega)|^{2}-\zeta^{2})}\leq
\sum_{i=1}^{\infty}\frac{2|\mu_{i}(B)|}{|\mu_{i}
(B)|^{p-1}(|\mu_{i}(B)|^{2}-\zeta^{2})}
,\,\,|\zeta|<\mu_{1}(B),
\end{equation}
for any integer $p\geq q.$
\end{lem}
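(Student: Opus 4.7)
The plan is to reduce inequality \eqref{n8} to Lemma \ref{lem:2} (which is available only at \emph{integer} exponents) by writing the summands as geometric series in $\zeta^{2}$, so that only the quantities $\sum_{i}1/\mu_{i}^{p+2k}$ appear, each at an integer exponent $\geq q$. First I would record the setup: by the positivity assumption $\mu_{i}(\Omega)=|\mu_{i}(\Omega)|>0$ and $\mu_{i}(B)=|\mu_{i}(B)|>0$, and by Theorem~\ref{THM:1} we have $\mu_{1}(B)\leq\mu_{1}(\Omega)\leq\mu_{i}(\Omega)$ for every $i$, and of course $\mu_{1}(B)\leq\mu_{i}(B)$. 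Hence the hypothesis $|\zeta|<\mu_{1}(B)$ ensures $|\zeta|/\mu_{i}<1$ uniformly in $i$ for both $\Omega$ and $B$, so all denominators $\mu_{i}^{2}-\zeta^{2}$ are strictly positive and all terms under consideration are non-negative.

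Next comes the algebraic identity. For every $\mu>|\zeta|$ one has
\begin{equation*}
\frac{2\mu}{\mu^{p-1}(\mu^{2}-\zeta^{2})}
=\frac{2/\mu^{p}}{1-\zeta^{2}/\mu^{2}}
=2\sum_{k=0}^{\infty}\frac{\zeta^{2k}}{\mu^{p+2k}}.
\end{equation*}
Applying this with $\mu=\mu_{i}(\Omega)$ and $\mu=\mu_{i}(B)$ and summing in $i$, Tonelli's theorem (all terms are non-negative) permits the interchange of summation, giving
\begin{equation*}
\sum_{i=1}^{\infty}\frac{2\mu_{i}(\Omega)}{\mu_{i}(\Omega)^{p-1}(\mu_{i}(\Omega)^{2}-\zeta^{2})}
=2\sum_{k=0}^{\infty}\zeta^{2k}\sum_{i=1}^{\infty}\frac{1}{\mu_{i}(\Omega)^{p+2k}},
\end{equation*}
and analogously for $B$.

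Now for every $k\geq 0$, the exponent $p+2k$ is an integer $\geq p\geq q$, so Lemma~\ref{lem:2} applies and yields
\begin{equation*}
\sum_{i=1}^{\infty}\frac{1}{\mu_{i}(\Omega)^{p+2k}}\leq \sum_{i=1}^{\infty}\frac{1}{\mu_{i}(B)^{p+2k}}.
\end{equation*}
Multiplying by the non-negative weight $\zeta^{2k}$ and summing over $k$ preserves the inequality; re-packaging the right-hand side via the same geometric series identity (used in reverse) produces exactly \eqref{n8}.

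The only delicate point is the Fubini/Tonelli step, but since every quantity in sight is strictly positive under the hypotheses (ensured by the positivity of the operators and by $|\zeta|<\mu_{1}(B)$), this is automatic; convergence of the outer $k$-series is guaranteed because the $k=0$ term alone, $\sum_{i}1/\mu_{i}(B)^{p}$, is finite by the assumption $\mathcal{K}_{B}\in\mathcal{S}^{q}\subset\mathcal{S}^{p}$. Thus the main (and only genuinely non-cosmetic) ingredient is the geometric-series decomposition that converts a rational function of $\zeta^{2}$ into a positive combination of pure power sums $\sum_{i}\mu_{i}^{-(p+2k)}$, each of which Lemma~\ref{lem:2} controls.
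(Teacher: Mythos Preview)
Your proposal is correct and follows essentially the same route as the paper: expand the rational summand as a geometric series in $\zeta^{2}$, interchange the sums (all terms non-negative, using Theorem~\ref{THM:1} to ensure $|\zeta|<\mu_{i}$ for every $i$), and apply Lemma~\ref{lem:2} at each integer exponent $p+2k$. Your write-up is slightly more explicit about the Tonelli step and the convergence justification, but the argument is the paper's own.
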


\begin{proof} [Proof of Lemma \ref{lem:3}]
By Theorem \ref{THM:1} we have that
$\mu_{1}(B)$ is the smallest of all $|\mu_{i}(B)|$ and $|\mu_{i}(\Omega)|,$ $|\Omega|=|B|,$ $i=1,2,....$
Therefore, for every $\zeta\in\mathbb R$ such that $|\zeta|<\mu_{1}(B)$ by using the geometric series we have

$$\sum_{i=1}^{\infty}\frac{|\mu_{i}(\Omega)|}{|\mu_{i}(\Omega)|^{p-1}(|\mu_{i}(\Omega)|^{2}-\zeta^{2})}=
\sum_{i=1}^{\infty}\frac{1}{|\mu_{i}(\Omega)|^{p}(1-\frac{\zeta^{2}}{|\mu_{i}(\Omega)|^{2}})}=$$
$$\sum_{i=1}^{\infty}\frac{1}{|\mu_{i}(\Omega)|^{p}}\sum_{n=0}^{\infty}
\left(\frac{\zeta^{2}}{|\mu_{i}(\Omega)|^{2}}\right)^{n}=
\sum_{n=0}^{\infty}\zeta^{2n}\sum_{i=1}^{\infty}\frac{1}{|\mu_{i}(\Omega)|^{p+2n}},\quad p\geq q.$$
Here the condition integer $p\geq q$ ensures convergence of the sum.
Now the claim follows from Lemma \ref{lem:2}, that is,
$$\sum_{n=0}^{\infty}\zeta^{2n}\sum_{i=1}^{\infty}\frac{1}{|\mu_{i}(\Omega)|^{p+2n}}\leq
\sum_{n=0}^{\infty}\zeta^{2n}\sum_{i=1}^{\infty}\frac{1}{|\mu_{i}(B)|^{p+2n}},$$
completing the proof.
\end{proof}

As a consequence of Lemma \ref{lem:3} we have the following inequality:
\begin{lem} \label{lem:4}
Under conditions of Lemma \ref{lem:3}, we have
\begin{equation}\label{n9}
\sum_{i=1}^{\infty}\frac{e^{-|\mu_{i}(\Omega)|t}}{|\mu_{i}(\Omega)|^{p-1}}\leq
\sum_{i=1}^{\infty}\frac{e^{-|\mu_{i}(B)|t}}{|\mu_{i}(B)|^{p-1}},\quad \forall t>0,
\end{equation}
for any bounded open domain $\Omega$ with $|\Omega|=|B|$ and any integer $p\geq q.$
\end{lem}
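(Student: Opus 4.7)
My plan is to bypass Lemma~\ref{lem:3} entirely and derive \eqref{n9} from Lemma~\ref{lem:2} applied to the $\zeta$-shifted operators (whose existence is guaranteed by the $\zeta$-kernel hypothesis \eqref{EQ:zk}), combined with Bernstein's theorem on completely monotone functions.

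The starting observation is that, by \eqref{EQ:zk}, for each $\zeta\ge 0$ the convolution operator $\mathcal{K}_{\Omega,\zeta}$ with kernel $K_\zeta$ (real, positive, and non-increasing) has characteristic numbers $\mu_i(\Omega)+\zeta$. Since the Schatten hypothesis on $\mathcal{K}_B$ transfers to $\mathcal{K}_{B,\zeta}$ (its eigenvalues $(|\mu_i(B)|+\zeta)^{-1}$ being dominated by $|\mu_i(B)|^{-1}$), Lemma~\ref{lem:2} applies to the shifted operators and yields
\[
\sum_{i\ge 1}\frac{1}{(|\mu_i(\Omega)|+\zeta)^{p}}\le \sum_{i\ge 1}\frac{1}{(|\mu_i(B)|+\zeta)^{p}},\qquad \zeta\ge 0,\ p\in\mathbb{N},\ p\ge q.
\]
Via the elementary identity $\frac{1}{(|\mu|+\zeta)^{p}}=\frac{1}{\Gamma(p)}\int_0^\infty t^{p-1}e^{-(|\mu|+\zeta)t}\,dt$ and Fubini, this rewrites as $L_p(\zeta):=\int_0^\infty e^{-\zeta t}t^{p-1}W(t)\,dt\ge 0$ for all such $\zeta$ and $p$, where $W(t):=Z_B(t)-Z_\Omega(t)$ and $Z_X(t):=\sum_{i}e^{-|\mu_i(X)|t}$.

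Next, I observe that $L_{p_0+m}(\zeta)=(-1)^m L_{p_0}^{(m)}(\zeta)$ by differentiation under the integral sign, so the non-negativity of $L_{p_0+m}$ on $[0,\infty)$ for every integer $m\ge 0$ is equivalent to $(-1)^m L_{p_0}^{(m)}(\zeta)\ge 0$ there; in other words, $L_{p_0}$ is completely monotone on $[0,\infty)$. By Bernstein's theorem, $L_{p_0}$ is therefore the Laplace transform of a non-negative Radon measure, and comparison with the explicit formula $L_{p_0}(\zeta)=\int_0^\infty e^{-\zeta t}(t^{p_0-1}W(t))\,dt$ together with uniqueness of the Laplace transform forces $t^{p_0-1}W(t)\ge 0$ almost everywhere on $(0,\infty)$. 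Continuity of $W$ as a Dirichlet series then upgrades this to the pointwise partition-function inequality $Z_\Omega(t)\le Z_B(t)$ for every $t>0$.

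Finally, for each integer $p\ge p_0\ge 2$ the elementary identity
\[
\frac{e^{-|\mu|t}}{|\mu|^{p-1}}=\frac{1}{\Gamma(p-1)}\int_t^\infty (u-t)^{p-2}\,e^{-|\mu|u}\,du
\]
and summation over $i$ yield $\sum_{i}\frac{e^{-|\mu_i(X)|t}}{|\mu_i(X)|^{p-1}}=\frac{1}{\Gamma(p-1)}\int_t^\infty (u-t)^{p-2}Z_X(u)\,du$. Since the kernel $(u-t)^{p-2}\mathbf{1}_{\{u>t\}}$ is non-negative, integrating the pointwise bound $Z_\Omega\le Z_B$ of the previous step yields \eqref{n9}. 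The hardest part of this plan is the middle step, where Bernstein's theorem must be carefully deployed to upgrade the family of moment-type inequalities into the pointwise bound $W\ge 0$; in particular one must justify the Laplace-inversion uniqueness step in the presence of possibly singular behaviour of $W(t)$ as $t\to 0^+$.
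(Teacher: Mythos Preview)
Your argument is correct and follows a genuinely different route from the paper's. Both proofs ultimately rest on the $\zeta$-kernel hypothesis and a complete-monotonicity/Post--Bernstein positivity argument, but the organisation differs substantially. The paper first passes through Lemma~\ref{lem:3} (a geometric-series identity for $\sum\frac{2|\mu_i|}{|\mu_i|^{p-1}(|\mu_i|^2-\zeta^2)}$), then inverts the \emph{bilateral} Laplace transform; to justify that this inversion preserves positivity it proves a separate inequality (Lemma~\ref{+s}) for the mixed sums $\sum|\mu_j|^{1-p}(|\mu_j|+\zeta)^{-n}$, requiring a Brascamp--Lieb--Luttinger step with a product of both the original kernel $K$ and the shifted kernel $K_\zeta$, and then invokes Post's inversion formula (Theorem~\ref{bryan}). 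You instead apply Lemma~\ref{lem:2} directly to the shifted operators $\mathcal{K}_{\Omega,\zeta}$ for all integers $p\ge p_0$, obtain complete monotonicity of a single Laplace transform $L_{p_0}$, and use Bernstein's theorem plus Laplace uniqueness to conclude the pointwise partition-function bound $Z_\Omega(t)\le Z_B(t)$; the weighted inequality \eqref{n9} then follows by one further non-negative integral representation. Your route is cleaner in that it needs only the pure $K_\zeta$ rearrangement inequality (not mixed $K$/$K_\zeta$ products), avoids the bilateral transform altogether, and yields the stronger intermediate statement $Z_\Omega\le Z_B$. The one place to be careful---which you flag---is the Laplace-uniqueness step: one should note that $t^{p_0-1}|W(t)|\le t^{p_0-1}(Z_B(t)+Z_\Omega(t))$ is integrable near $0$ (by Tonelli and the Schatten hypothesis, exactly as in your computation of $L_p(\zeta)$), so $t^{p_0-1}W(t)\,dt$ is a bona fide signed measure of exponential type and the standard uniqueness theorem applies.
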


\begin{proof} [Proof of Lemma \ref{lem:4}]
Let $\mathcal{B}$ be the bilateral Laplace transform
\begin{equation}\label{EQ:Bil}
\mathcal{B}\{f(t)\}(\zeta)=\int_{-\infty}^{\infty}e^{-\zeta t}f(t)dt.
\end{equation}
Using the inverse bilateral Laplace transform (frequency shift of
unit step) we have
$$
\mathcal{B}^{-1}\left\{\frac{2 |\mu_{i}|}{|\mu_{i}|^{2}-\zeta^{2}}\right\}(t)=
e^{-|\mu_{i}| |t|},\,\,|\zeta|< \mu_{1}(B).
$$
Note that since $\mu_{1}(B)$ is the least eigenvalue of all $\mu_{i}(B)$ and $\mu_{i}(\Omega),$ $i=1,2,...,$ (see Theorem \ref{THM:1}) the above equality holds for all $\zeta$ such that $|\zeta|< \mu_{1}(B)$.
By applying $\mathcal{B}^{-1}$ to both sides of \eqref{n8}
we obtain \eqref{n9} (see Lemma \ref{pos:f} for $\mathcal{B}^{-1}$ preserving inequality \eqref{n8}).
\end{proof}

One might have a question concerning the proof of Lemma \ref{lem:4}, that is,
why does the inverse bilateral Laplace transform preserve the inequality
\eqref{n8}?
In other words, why is the inverse bilateral Laplace transform of a positive function positive?
Of course, this is not true in general.
However, for the Laplace transform
$$\mathcal{L}\{f(t)\}(\zeta)=\int_{0}^{\infty}e^{-\zeta t}f(t)dt,$$
the inverse Laplace transform of a positive function is positive for some
classes of functions,
that is, the following theorem is valid (see \cite[Theorem 2.3]{Br}).
\begin{thm}[\cite{Br}]\label{bryan}
Let $f$ be a continuous function on the interval $[0,\infty)$ which is of exponential order, that is, for some $b\in\mathbb R$
it satisfies
$$\sup_{t>0}\frac{|f(t)|}{e^{bt}}<\infty,$$
and let $F_{1}:=\mathcal{L}f$.
Then $f$ is non-negative if and only if
\begin{equation}\label{EQ:Lappos}
(-1)^{k} F^{(k)}_{1}(s)\geq 0 \quad \textrm{ for all } k\geq 0 \textrm{ and all } s>b.
\end{equation}
\end{thm}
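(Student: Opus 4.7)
The statement is a biconditional, so I would handle the two implications separately. The necessity direction (if $f\geq 0$ then the signed derivatives of $F_1$ are non-negative) is the routine one; the sufficiency direction is the real content.

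For the necessity, I would start from $F_1(s)=\int_0^\infty e^{-st}f(t)\,dt$. The exponential order hypothesis $|f(t)|\leq Ce^{bt}$ ensures that for every $s>b$ the integrand and all its $s$-derivatives are dominated by an integrable function of $t$ on each compact $s$-interval inside $(b,\infty)$. Differentiating $k$ times under the integral sign therefore gives
\begin{equation*}
F_1^{(k)}(s)=\int_0^\infty(-t)^k e^{-st}f(t)\,dt,
\end{equation*}
so $(-1)^kF_1^{(k)}(s)=\int_0^\infty t^ke^{-st}f(t)\,dt\geq 0$ whenever $f\geq 0$.

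For the sufficiency, my plan is to invoke the Post--Widder real inversion formula, which asserts that under the standing hypotheses of continuity and exponential order,
\begin{equation*}
f(t)=\lim_{k\to\infty}\frac{(-1)^k}{k!}\left(\frac{k}{t}\right)^{k+1}F_1^{(k)}\!\left(\frac{k}{t}\right),\qquad t>0.
\end{equation*}
Once this is available, the conclusion is immediate: for any fixed $t>0$, one has $k/t>b$ for all sufficiently large $k$, and then the hypothesis \eqref{EQ:Lappos} says each expression under the limit is non-negative, forcing $f(t)\geq 0$. Continuity of $f$ at $0$ then extends the inequality to the endpoint.

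The main obstacle is therefore establishing the Post--Widder formula itself. I would prove it by rewriting
\begin{equation*}
\frac{(-1)^k}{k!}\left(\frac{k}{t}\right)^{k+1}F_1^{(k)}\!\left(\frac{k}{t}\right)=\int_0^\infty \varphi_{k,t}(u)f(u)\,du,
\end{equation*}
where $\varphi_{k,t}(u)=\frac{1}{k!}(k/t)^{k+1}u^k e^{-ku/t}$ is the density of a Gamma distribution with mean $t$ and variance $t^2/k$. A Stirling-type estimate and a standard Laplace/saddle-point argument show that $\varphi_{k,t}$ concentrates at $u=t$ as $k\to\infty$, so the integral converges to $f(t)$ by continuity, with the tail controlled using the exponential order bound (choosing $k$ large enough that $k/t>b+\varepsilon$ makes the decay $e^{-ku/t}$ beat $e^{bu}$). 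This concentration argument, together with the justification of differentiating the Laplace integral inside $(b,\infty)$, constitutes the technical core of the proof; the rest is bookkeeping.
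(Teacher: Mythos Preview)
Your proposal is correct and matches the paper's own treatment essentially verbatim: the paper also deduces the sufficiency direction directly from Post's inversion formula \eqref{Post}, noting that each term under the limit is non-negative under hypothesis \eqref{EQ:Lappos}. You go slightly further by sketching the Gamma-concentration proof of Post--Widder and spelling out the necessity direction, but the strategy is the same.
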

In fact this positivity result follows directly from Post's inversion formula (see \cite{Post:1930})
\begin{equation}\label{Post}
f(t)=\lim_{k\rightarrow\infty}
\frac{(-1)^{k}}{k!}\left(\frac{k}{t}\right)^{k+1}F^{(k)}_{1}\left(\frac{k}{t}\right)
\end{equation}
for $t>0$.
If \eqref{EQ:Lappos} is valid then the expression on the right hand side of \eqref{Post} is non-negative.
Therefore, the limit $f(t)$ is necessarily non-negative for all $t$.

In the proof of Lemma \ref{lem:4}, we have the bilateral Laplace transform
$\mathcal{B}$ in \eqref{EQ:Bil},
which can be also reduced to the
Laplace transform
$\mathcal{L}.$
Therefore, it can be checked that in our special case the inequality is actually preserved
(for this particular function). To show it first we prove

\begin{lem} \label{+s} Let $p$ be an integer $\geq q.$ Then
\begin{equation}\label{fff4}
\sum_{j=1}^{\infty}\frac{1}{|\mu_{j}(\Omega)|^{p-1}(|\mu_{j}(\Omega)|
+\zeta)^{n}}\leq\sum_{j=1}^{\infty}\frac{1}{|\mu_{j}(B)|^{p-1}
(|\mu_{j}(B)|+\zeta)^{n}},
\end{equation}
for any $n\in \mathbb N$ and $0<|\mu_{j}(B)|+\zeta$, that is, $-|\mu_{1}(B)|<\zeta$.
\end{lem}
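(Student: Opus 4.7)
The plan is to prove \eqref{fff4} in two overlapping subranges of $\zeta$, namely $(-|\mu_1(B)|,0]$ and $[0,\infty)$, which together cover the stated range and overlap at $\zeta=0$, where both arguments reduce to Lemma~\ref{lem:2}. For $\zeta\in(-|\mu_1(B)|,0]$, I would expand each summand via the generalized binomial series
$$\frac{1}{|\mu_j|^{p-1}(|\mu_j|+\zeta)^n}=\sum_{k=0}^\infty\binom{n+k-1}{k}\frac{(-\zeta)^k}{|\mu_j|^{p+n+k-1}},$$
which converges absolutely since $|\zeta|<|\mu_1(B)|\leq|\mu_j|$. For $\zeta\leq 0$ every term is non-negative, so Tonelli lets me swap the order of summation:
$$\sum_j\frac{1}{|\mu_j|^{p-1}(|\mu_j|+\zeta)^n}=\sum_{k=0}^\infty(-\zeta)^k\binom{n+k-1}{k}\sum_j\frac{1}{|\mu_j|^{p+n+k-1}}.$$
For each $k\geq 0$ the exponent $p+n+k-1$ is an integer $\geq p\geq q$, so Lemma~\ref{lem:2} bounds $\sum_j|\mu_j(\Omega)|^{-(p+n+k-1)}\leq\sum_j|\mu_j(B)|^{-(p+n+k-1)}$; weighting these termwise inequalities by the non-negative factors $(-\zeta)^k\binom{n+k-1}{k}$ and resumming in $k$ yields \eqref{fff4}.

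For $\zeta\geq 0$, I would invoke the $\zeta$-kernel hypothesis \eqref{EQ:zk}. Let $\tilde{\mathcal K}_{\Omega,\zeta}$ denote the integral operator on $L^2(\Omega)$ with kernel $K_\zeta$. By \eqref{EQ:zk}, the eigenfunctions $u_i$ of $\mathcal K_\Omega$ are also eigenfunctions of $\tilde{\mathcal K}_{\Omega,\zeta}$ with characteristic numbers $\mu_i(\Omega)+\zeta$, so the two operators commute, and the positive compact operator $\mathcal K_\Omega^{p-1}\tilde{\mathcal K}_{\Omega,\zeta}^n$ has eigenvalues $1/[\mu_j(\Omega)^{p-1}(\mu_j(\Omega)+\zeta)^n]$. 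It is trace-class because each eigenvalue is bounded by $|\mu_j(\Omega)|^{-(p+n-1)}$, and this series converges by Lemma~\ref{lem:2} (applied with exponent $p+n-1\geq q$). Writing its trace as a cyclic iterated integral, exactly as in the proof of Lemma~\ref{lem:2}, gives
$$\sum_j\frac{1}{|\mu_j(\Omega)|^{p-1}(|\mu_j(\Omega)|+\zeta)^n}=\int_{\Omega^{p+n-1}}\!\!K(|y_1-y_2|)\cdots K(|y_{p-1}-y_p|)K_\zeta(|y_p-y_{p+1}|)\cdots K_\zeta(|y_{p+n-1}-y_1|)\,dy_1\cdots dy_{p+n-1}.$$
Since $K$ and $K_\zeta$ are both positive and non-increasing in their arguments, the Brascamp--Lieb--Luttinger inequality \cite{BLL} dominates this by the same integral over $B^{p+n-1}$, which by the trace formula on the $B$-side unfolds as the right-hand side of \eqref{fff4}.

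The main technical point I anticipate is checking that the mixed-kernel cyclic integrand in the second case truly fits the hypothesis of Brascamp--Lieb--Luttinger, but this is immediate because every factor is a symmetric-decreasing function of a pairwise difference of integration variables, i.e.\ exactly the configuration handled in Lemma~\ref{lem:2}. The only other subtlety is the interchange of summations in the first case, which is justified by non-negativity via Tonelli's theorem.
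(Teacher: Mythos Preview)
Your proposal is correct and follows essentially the same two-case strategy as the paper: for $\zeta\in(-|\mu_1(B)|,0]$ the paper Taylor-expands the difference $F_n(\zeta)$ at $0$ (which is algebraically identical to your generalized binomial expansion) and applies Lemma~\ref{lem:2} termwise, while for $\zeta>0$ the paper also expresses the sum as a trace of $\mathcal{K}_\Omega^{p-1}\tilde{\mathcal{K}}_{\Omega,\zeta}^{\,n}$, writes it as an iterated-kernel integral, and invokes Brascamp--Lieb--Luttinger. Your explicitly unfolded cyclic integral over $\Omega^{p+n-1}$ is in fact a cleaner way to see why BLL applies than the paper's shorthand $K_{p-1}(|x-y|)K_{\zeta,n}(|x-y|)$, but the substance is the same.
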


\begin{proof} [Proof of Lemma \ref{+s}]
Since by the assumption $\mathcal{K}_\Omega$ is a positive operator, we have
 $\mu_{i}=|\mu_{i}|$, $i=1,2,\ldots$, and let us rewrite \eqref{n7} in the form
\begin{equation}
\sum_{j=1}^{\infty}\frac{1}{|\mu_{j}(\Omega)|^{p}}\leq
\sum_{j=1}^{\infty}\frac{1}{|\mu_{j}(B)|^{p}},\,\, p\geq q,\,\, p\in \mathbb N.\label{ff3}
\end{equation}
Since we have added the same number $\zeta$ to denominators of both sides of \eqref{ff3}
by preserving the sign of each term we get the following analytic function in $(-\lambda_{1},\infty)$, with $\lambda_1=|\mu_{j}(B)|$:
\begin{equation}\label{ff4}
F_{n}(\zeta):=\sum_{j=1}^{\infty}
\frac{1}{|\mu_{j}(B)|^{p-1}(|\mu_{j}(B)|+\zeta)^{n}}
-\sum_{j=1}^{\infty}\frac{1}{|\mu_{j}(\Omega)|^{p-1}(|\mu_{j}(\Omega)|
+\zeta)^{n}}.
\end{equation}
First let us  show that the positivity of \eqref{ff4} follows from \eqref{ff3} when
$\zeta\in(-\lambda_{1},0].$
Notice that
\begin{equation}\label{rec}
F^{\prime}_{n}(\zeta)=-nF_{n+1}(\zeta),\,n=1,2,....
\end{equation}
Therefore, we obtain
\begin{equation}\label{Mac}
F_{n}(\zeta)=F_{n}(0)-nF_{n+1}(0)\zeta+
\frac{n(n+1)}{2!}F_{n+2}(0)\zeta^{2}-
\frac{n(n+1)(n+2)}{3!}F_{n+3}(0)\zeta^{3}+....
\end{equation}
By \eqref{ff3} we have $F_{n}(0)\geq 0$ for all positive integers $n$, so it is clear from \eqref{Mac} that
\begin{equation}\label{counter}
F_{n}(\zeta)\geq 0,
\end{equation}
for $\zeta\in(-\lambda_{1},0].$
Now it is left to prove the inequality \eqref{fff4} for the case $\zeta>0.$
By using bilinear expansion of the iterated kernels, namely,
$$K_{p}(|x-y|)=\int_{\Omega}...
\int_{\Omega}K(|x-y_{1}|)K(|y_{1}-y_{2}|)...K(|y_{p-1}-y|)dy_{1}...dy_{p-1}$$
and
$$
K_{\zeta,1}(|x-y|):=K_{\zeta}(|x-y|),
$$

$$K_{\zeta,n}(|x-y|)=\int_{\Omega}...
\int_{\Omega}K_{\zeta}(|x-y_{1}|)
K_{\zeta}(|y_{1}-y_{2}|)...
K_{\zeta}(|y_{n-1}-y|)dy_{1}...dy_{n-1},\,n=2,3,\ldots,$$
we obtain
\begin{equation}
\sum_{j=1}^{\infty}\frac{1}{\mu_{j}^{p-1}(\Omega)(\mu_{j}(\Omega)+\zeta)^{n}}
=
\int_{\Omega}\int_{\Omega}K_{p-1}(|x-y|)
K_{\zeta,n}(|x-y|)dxdy,
\end{equation}
for $p=p_{0},p_{0}+1,\ldots,\,n=1,2,\ldots,$ where $p_{0}$ is the smallest integer $\geq q.$
It follows from the Brascamp-Lieb-Luttinger inequality that
\begin{multline}
\int_{\Omega}\int_{\Omega}K_{p-1}(|x-y|)
K_{\zeta,n}(|x-y|)dxdy\leq \\
\int_{B}\int_{B}K_{p-1}(|x-y|)
K_{\zeta,n}(|x-y|)dxdy,
\end{multline}
which proves
\begin{equation}
\sum_{j=1}^{\infty}\frac{1}{|\mu_{j}(\Omega)|^{p-1}(|\mu_{j}(\Omega)|
+\zeta)^{n}}
\leq\sum_{j=1}^{\infty}\frac{1}{|\mu_{j}(B)|^{p-1}(|\mu_{j}(B)|+\zeta)^{n}}
\end{equation}
for any bounded open domain $\Omega\subset \mathbb R^{d}$ with $|\Omega|=|B|$ and an arbitrary $\zeta>0.$
Here we have used that by the assumption $K_{\zeta}$ is a symmetric-decreasing function.
\end{proof}

\begin{lem} \label{pos:f}
The inverse bilateral Laplace transform preserves
the inequality \eqref{n8}.
\end{lem}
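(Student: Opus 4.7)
The plan is to reduce the bilateral Laplace inversion in \eqref{n8} to a one-sided Laplace positivity statement that Theorem \ref{bryan} of Bryan handles directly; the complete-monotonicity hypothesis will come from Lemma \ref{+s}. The pivotal observation is the partial fraction identity
$$\frac{2|\mu|}{|\mu|^{2}-\zeta^{2}}=\frac{1}{|\mu|+\zeta}+\frac{1}{|\mu|-\zeta},$$
which mirrors the split of $e^{-|\mu||t|}$ into its $t>0$ and $t<0$ halves. Since \eqref{n9} concerns only $t>0$ and $e^{-|\mu||t|}$ is even in $t$, it suffices to invert the $\frac{1}{|\mu|+\zeta}$-piece.

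Concretely, I would set
$$G(\zeta):=\sum_{j=1}^{\infty}\frac{1}{|\mu_{j}(B)|^{p-1}(|\mu_{j}(B)|+\zeta)}-\sum_{j=1}^{\infty}\frac{1}{|\mu_{j}(\Omega)|^{p-1}(|\mu_{j}(\Omega)|+\zeta)},$$
which is precisely $F_{1}(\zeta)$ in the notation of \eqref{ff4}, and observe that $G$ is the one-sided Laplace transform of
$$g(t):=\sum_{j=1}^{\infty}\frac{e^{-|\mu_{j}(B)|t}}{|\mu_{j}(B)|^{p-1}}-\sum_{j=1}^{\infty}\frac{e^{-|\mu_{j}(\Omega)|t}}{|\mu_{j}(\Omega)|^{p-1}}.$$
Termwise differentiation (justified on compact subsets of $(-\mu_{1}(B),\infty)$ by absolute convergence of the series under $p\geq q$) yields $(-1)^{k}G^{(k)}(\zeta)=k!\,F_{k+1}(\zeta)$, and Lemma \ref{+s} then delivers $(-1)^{k}G^{(k)}(\zeta)\geq 0$ for all integers $k\geq 0$ and all $\zeta>-\mu_{1}(B)$. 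Hence $G$ is completely monotone on $(0,\infty)$.

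Applying Theorem \ref{bryan} with $b=0$ now gives $g(t)\geq 0$ for all $t>0$, which is exactly the inequality \eqref{n9}. The main obstacle is the analytic side-bookkeeping required by Theorem \ref{bryan}: one must verify continuity of $g$ on $[0,\infty)$ and its exponential order, both of which rest on the Schatten assumption $\mathcal{K}_{B}\in\mathcal{S}^{q}(L^{2}(B))$ with $p\geq q$, guaranteeing that the series defining $g$ converges uniformly and decays at infinity. Once these routine convergence checks are done, the conceptual content of Lemma \ref{pos:f} reduces to combining the partial fraction decomposition with the complete-monotonicity inequality of Lemma \ref{+s}.
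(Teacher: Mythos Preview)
Your proposal is correct and is essentially the same argument as the paper's: the partial-fraction split $\frac{2|\mu|}{|\mu|^{2}-\zeta^{2}}=\frac{1}{|\mu|+\zeta}+\frac{1}{|\mu|-\zeta}$ is exactly the paper's decomposition $F(\zeta)=F_{1}(\zeta)+E_{1}(-\zeta)$ of the bilateral transform into two one-sided Laplace transforms, and both proofs then verify complete monotonicity of $F_{1}$ via Lemma~\ref{+s} and conclude by Theorem~\ref{bryan}. Your write-up is actually a bit more careful in flagging the convergence and exponential-order hypotheses needed to invoke Bryan's theorem, which the paper leaves implicit.
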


\begin{proof} [Proof of Lemma \ref{pos:f}]
One has
\begin{equation}\label{f1}
\mathcal{B}\{f(t)\}(s)=\mathcal{L}\{f(t)\} (s)+\mathcal{L}\{f(-t)\}(-s),
\end{equation}
where $\mathcal{B}$ is the bilateral Laplace transform and
$\mathcal{L}$ is the Laplace transform.
Let us rewrite \eqref{f1} in the form
\begin{equation}\label{f2}
F(s)=F_{1}(s)+E_{1}(-s)
\end{equation}
with
$$F(s)=\mathcal{B}\{f(t)\}(s),\,F_{1}(s)=\mathcal{L}\{f(t)\}(s),\,
E_{1}(-s)=\mathcal{L}\{f(-t)\}(-s).$$
In our case we have
$$F(\zeta)=
\sum_{j=1}^{\infty}\frac{2|\mu_{j}(B)|}{|\mu_{j}(B)|^{p-1}
(|\mu_{j}(B)|^{2}-\zeta^{2})}-\frac{2|\mu_{j}(\Omega)|}{
|\mu_{j}(\Omega)|^{p-1}
(|\mu_{j}(\Omega)|^{2}-\zeta^{2})}
,\,\,|\zeta|<|\mu_{1}(B)|,$$

$$F_{1}(\zeta)=
\sum_{j=1}^{\infty}\frac{1}{|\mu_{j}(B)|^{p-1}
(|\mu_{j}(B)|+\zeta)}-\frac{1}{|\mu_{j}(\Omega)|^{p-1}
(|\mu_{j}(\Omega)|+\zeta)}
,\,\,-|\mu_{1}(B)|<\zeta,$$

$$E_{1}(-\zeta)=
\sum_{j=1}^{\infty}\frac{1}{|\mu_{j}(B)|^{p-1}
(|\mu_{j}(B)|-\zeta)}-\frac{1}{|\mu_{j}(\Omega)|^{p-1}
(|\mu_{j}(\Omega)|-\zeta)}
,\,\,\zeta<|\mu_{1}(B)|.$$
To show the positivity of $f(t)$ it is sufficient to check the conditions \eqref{EQ:Lappos} for $F_{1}$.
From Lemma \ref{+s} we have
\begin{equation}
0\leq\sum_{j=1}^{\infty}\frac{1}{|\mu_{j}(B)|^{p-1}(|\mu_{j}(B)|+\zeta)^{n}}-
\frac{1}{|\mu_{j}(\Omega)|^{p-1}(|\mu_{j}(\Omega)|+\zeta)^{n}},
\end{equation}
for any $n\in \mathbb N$ and $-|\mu_{1}(B)|<\zeta$.
It follows that
$$0\leq(-1)^{(k)}F_{1}^{(k)}(\zeta),\quad k=0,1,2,...,$$
for all $\zeta>-|\mu_{1}(B)|$, which proves the positivity of $f$
(by Theorem \ref{bryan}), that is, $f(t)\geq 0$ for all $t>0.$
Since in our case we have $f(|t|)$ this shows
$$f(t)\geq 0\; \textrm{for all}\;t\in \mathbb R.$$
This confirms that the inverse bilateral Laplace transform preserves
the inequality \eqref{n8}.
\end{proof}

\begin{proof} [Proof of Theorem \ref{THM:main}]
The proof of Theorem \ref{THM:main} now follows directly from Lemma \ref{lem:4}.
Indeed, applying the Mellin transform
$$
\frac{1}{|\mu_{i}|^{l}}=
\frac{1}{\Gamma(l)}\int_{0}^{\infty} \exp(-|\mu_{i}| t)t^{l-1}dt, \quad
\textrm{ for any real } \;  l>1,
$$
to the inequality \eqref{n9} leads to
\begin{equation}\label{n10}
\sum_{i=1}^{\infty}\frac{1}{|\mu_{i}(\Omega)|^{p-1+l}}\leq
\sum_{i=1}^{\infty}\frac{1}{|\mu_{i}(B)|^{p-1+l}}
\end{equation}
for any real $l>1$ and any integer $p\geq q$. As before let $p_{0}$
be the smallest integer $\geq q$. Then since $l$ is arbitrary real number $>1$, from \eqref{n10} we obtain
\begin{equation}
\sum_{i=1}^{\infty}\frac{1}{|\mu_{i}(\Omega)|^{p}}\leq
\sum_{i=1}^{\infty}\frac{1}{|\mu_{i}(B)|^{p}}
\end{equation}
for any real $p> p_{0}$. In addition, by Lemma \ref{lem:2} we confirm that
the inequality is also true when $p=p_{0}$.
This completes the proof of Theorem \ref{THM:main}.
\end{proof}

\begin{rem}
Our main techniques are the Brascamp-Lieb-Luttinger inequality for multiple integrals,
and positive integral representations of the trace
of the $p$-th power of the operator, that is, $p_{0}$ being an integer is important. Therefore,
the techniques do not allow us to prove Theorem \ref{THM:main} for $q\leq p<p_{0}$.
In view of the Dirichlet Laplacian case,
it seems reasonable to conjecture that
the Schatten $p$-norm is still
maximized on the ball also for $q\leq p<p_{0}$. In addition, we also conjecture that Theorem \ref{THM:main} is valid without the assumption
on the existence of $\zeta$-kernel.
\end{rem}

\section{The case of polygons}
\label{SEC:6}
We can ask the same question of maximizing the Schatten $p$-norms in the class of polygons with a
given number $n$ of sides. We denote by $\mathcal{P}_{n}$ the class of plane polygons with $n$ edges.
We would like to identify the maximizer for Schatten $p$-norms of the convolution type positive compact
operator $\mathcal{K}_{\Omega}$ in $\mathcal{P}_{n}$.
According to Section \ref{SEC:result}, it is natural to conjecture that it is the $n$-regular polygon.
Currently, we can prove it only for $n=3$:

\begin{thm}\label{THM:tri}
Let $\Delta$ be an equilateral triangle centered at the origin and
assume that $\mathcal{K}_{\Delta}\in\mathcal{S}^{q}(L^{2}(\Delta))$
for some $q> 1$. Let $\Omega$ be any triangle with $|\Omega|=|\Delta|.$ Assume that $\mathcal{K}_{\Omega}$ and $\mathcal{K}_{\Delta}$ are positive operators admitting $\zeta$-kernels.
Then
\begin{equation}
\|\mathcal{K}_{\Omega}\|_{p}\leq  \|\mathcal{K}_{\Delta}\|_{p}
\label{3-2}
\end{equation}
for any $p_{0}\leq p< \infty$, where $p_{0}$ is the smallest integer $\geq q.$
\end{thm}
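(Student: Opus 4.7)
The plan is to mimic the argument of Theorem \ref{THM:main}, replacing spherical symmetric-decreasing rearrangement by a sequence of Steiner symmetrizations that carries an arbitrary triangle to the equilateral triangle of the same area. First I would establish the triangle analogue of Lemma \ref{lem:2}: for every integer $p\geq q$,
\[
\sum_{j=1}^{\infty}\frac{1}{\mu_{j}^{p}(\Omega)}\leq \sum_{j=1}^{\infty}\frac{1}{\mu_{j}^{p}(\Delta)}.
\]

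Starting from the trace identity
\[
\sum_{j=1}^{\infty}\frac{1}{\mu_{j}^{p}(\Omega)}=\int_{\Omega}\cdots\int_{\Omega}K(|y_{1}-y_{2}|)\cdots K(|y_{p}-y_{1}|)\,dy_{1}\cdots dy_{p},
\]
I would Steiner symmetrize $\Omega$ with respect to the perpendicular bisector of one of its sides: the result is again a triangle (isoceles with that side as base) of the same area, and since the kernel $K$ is symmetric-decreasing, the Brascamp--Lieb--Luttinger inequality \cite{BLL} ensures that the multiple integral does not decrease. By a classical construction going back to P\'olya--Szeg\"o \cite{Po1}, one may iterate such symmetrizations---cycling appropriately through the perpendicular bisectors of the three sides---to obtain a sequence of triangles of fixed area which converges in Hausdorff distance to the equilateral triangle $\Delta$. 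Rewriting the multiple integral as iterated convolutions of $\chi_{\Omega}$ with $K$, one verifies that it depends continuously on the domain along this sequence, so passing to the limit yields the desired integer-$p$ inequality.

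Once this integer-$p$ inequality is in hand, the remainder of the proof is a verbatim transcription of the ball-case argument with $B$ replaced by $\Delta$ throughout. The analogues of Lemmas \ref{lem:3}, \ref{+s}, \ref{pos:f}, \ref{lem:4} and the concluding Mellin-transform step rely only on (i) the integer-$p$ trace inequality, (ii) positivity of the operators, (iii) the existence of a symmetric-decreasing $\zeta$-kernel, and (iv) general properties of the bilateral Laplace and Mellin transforms---none of which uses the ball structure. One also needs the triangle version of Theorem \ref{THM:1}, namely $\mu_{1}(\Delta)\leq \mu_{1}(\Omega)$ whenever $|\Omega|=|\Delta|$, which is needed to justify the geometric-series expansion in the $\zeta$-shifted inequality; this follows from the same Steiner-symmetrization scheme applied to the trace with $p=1$ (or from the variational characterisation of $\mu_{1}$).

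The main obstacle is the first step: justifying the convergence of the iterated Steiner-symmetrization scheme for triangles to the equilateral triangle, together with the continuity of the multiple integrals under this convergence. This is the one genuinely triangle-specific ingredient; everything else is a direct transcription of the ball-case argument in Theorem \ref{THM:main}.
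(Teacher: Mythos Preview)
Your proposal is correct and follows essentially the same route as the paper: replace spherical rearrangement by iterated Steiner symmetrizations with respect to the perpendicular bisectors of the sides, invoke the Brascamp--Lieb--Luttinger inequality for Steiner symmetrization to get the integer-$p$ trace inequality, and then transcribe the chain of Lemmas \ref{lem:2}--\ref{pos:f} and the Mellin step verbatim with $B$ replaced by $\Delta$. The paper states the needed triangle analogue of Theorem \ref{THM:1} separately as Theorem \ref{THM:2}, proving it via the variational principle for $\mathcal{K}_\Delta^{2}$; note that your alternative suggestion of using the $p=1$ trace would not work here since only $\mathcal{K}_\Delta\in\mathcal{S}^{q}$ with $q>1$ is assumed.
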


Let $u$ be a nonnegative, measurable function on $\mathbb{R}^{d}$, and let $V$ be a $d-1$ dimensional plane through the origin of $\mathbb{R}^{d}$. Choose an orthogonal coordinate system in $\mathbb{R}^{d}$ such that the $x^{1}$-axis is perpendicular to $V\ni z=(x^{2},\ldots,x^{d})$.
\begin{defn}[\cite{BLL}]
A nonnegative, measurable function $u^{\star}(x|V)$ on $\mathbb{R}^{d}$ is called a Steiner symmetrization with respect to $V$ of the function $u(x)$, if $u^{\star}(x^{1},x^{2},\ldots,x^{d})$ is a symmetric decreasing rearrangement with respect to $x^{1}$ of $u(x^{1},x^{2},\ldots,x^{d})$ for each fixed $x^{2},\ldots,x^{d}$.
\end{defn}

The Steiner symmetrization (with respect to the $x^{1}$-axis) $\Omega^{\star}$
of a measurable set $\Omega$ is defined in the following way:
if we write $x=(x^{1},z)$ with $z\in \mathbb{R}^{d-1}$, and let
$\Omega_{z}=\{x^{1}: (x^{1},z)\in\Omega\}$, then
$$
\Omega^{\star}:=\{(x^{1},z)\in \mathbb{R}\times\mathbb{R}^{d-1}: x^{1}\in \Omega^{*}_{z}\},
$$
where $\Omega^{*}_{z}$ is a symmetric rearrangement of $\Omega_{z}$
(see the proof of Theorem \ref{THM:1}).

Since the Steiner symmetrization has the same property as the symmetric-decreasing rearrangement (see \cite[Lemma 3.2]{BLL}), that is,
for a non-increasing positive function $K(|x-y|)$ and a positive function $u$ we have

\begin{multline}
\int_{\Omega}\int_{\Omega}\int_{\Omega}
u(y)K(|y-z|)K(|z-x|)u(x)dzdydx\leq
\\
\int_{\Omega^{\star}}\int_{\Omega^{\star}}
\int_{\Omega^{\star}}u^{\star}(y)K(|y-z|)K(|z-x|)u^{\star}(x)dzdydx,\label{nn1}
\end{multline}
this will be useful for our analysis.

Indeed, first we obtain
the following analogue of the P{\'o}lya theorem (\cite{Po}) for the integral operator
$\mathcal{K}_\Omega$ for triangles $\Omega$. See also the book by
P{\'o}lya and Szeg{\"o} \cite{Po1}.

\begin{thm} \label{THM:2}
The equilateral triangle  $\Delta$ centred at the
origin is a minimizer of the first characteristic number of the convolution type compact operator $\mathcal{K}_\Omega$ among all triangles of a given area, i.e.
$$0<\mu_{1}(\Delta)\leq \mu_{1}(\Omega)$$
for any triangle $\Omega\subset \mathbb R^{2}$ with $|\Omega|=|\Delta|.$
\end{thm}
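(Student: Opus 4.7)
The plan is to follow P{\'o}lya's classical argument for the Dirichlet Laplacian on triangles (\cite{Po,Po1}): combine a Rayleigh quotient characterization of $1/\mu_{1}(\Omega)$ with iterated Steiner symmetrizations through the perpendicular bisectors of the three sides of the triangle. By Jentsch's theorem (Remark \ref{REM:op}), $\mu_{1}(\Omega)$ is positive and simple, and its eigenfunction $u_{1}$ can be chosen nonnegative on $\Omega$; hence
\begin{equation*}
\frac{1}{\mu_{1}(\Omega)}=\max_{0\neq u\in L^{2}(\Omega)}\frac{\int_{\Omega}\int_{\Omega}K(|x-y|)u(x)u(y)\,dx\,dy}{\int_{\Omega}u(x)^{2}\,dx},
\end{equation*}
attained at $u=u_{1}\geq 0$. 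In particular $\mu_{1}(\Delta)>0$.

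The first key step is Steiner monotonicity of $\mu_{1}$. For a line $V\subset\mathbb R^{2}$, write $\Omega^{\star}$ for the Steiner symmetrization of $\Omega$ with respect to $V$ and $u^{\star}$ for that of $u\geq 0$ (extended by zero outside $\Omega$). The two-factor case of the Brascamp-Lieb-Luttinger inequality, i.e.\ the Steiner analogue of the Riesz rearrangement inequality (\cite[Lemma 3.2]{BLL}; the three-factor instance is \eqref{nn1}), gives
\begin{equation*}
\int_{\Omega}\int_{\Omega}K(|x-y|)u(x)u(y)\,dx\,dy\;\leq\;\int_{\Omega^{\star}}\int_{\Omega^{\star}}K(|x-y|)u^{\star}(x)u^{\star}(y)\,dx\,dy,
\end{equation*}
while $|\Omega^{\star}|=|\Omega|$ and $\|u^{\star}\|_{L^{2}}=\|u\|_{L^{2}}$. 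Choosing $u=u_{1}$, dividing both sides by $\|u_{1}\|_{L^{2}}^{2}$, and bounding the resulting Rayleigh quotient on $\Omega^{\star}$ by its supremum $1/\mu_{1}(\Omega^{\star})$ yields $\mu_{1}(\Omega^{\star})\leq \mu_{1}(\Omega)$.

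Next, the class of triangles is preserved by Steiner symmetrization with respect to the perpendicular bisector of any side: placing the side $AB$ on the $x$-axis with midpoint at the origin and $A=(-a,0)$, $B=(a,0)$, $C=(c,h)$, the horizontal cross-section at height $y\in[0,h]$ has length $2a(1-y/h)$, so centering it on the $y$-axis (which is the perpendicular bisector of $AB$) produces the isosceles triangle with vertices $(-a,0)$, $(a,0)$, $(0,h)$ of the same area. Starting from $\Omega_{0}=\Omega$ and iteratively applying Steiner symmetrization through the perpendicular bisectors of the three sides of $\Omega_{k}$ in cyclic order produces a sequence $(\Omega_{k})$ of triangles of equal area along which $\mu_{1}$ is non-increasing.

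The perimeter strictly decreases at each non-trivial step, so $(\Omega_{k})$ stays in a compact family of non-degenerate triangles modulo rigid motions; any Hausdorff subsequential limit is invariant under all three cyclic Steiner symmetrizations, hence isosceles with respect to each of its sides, hence equilateral, i.e.\ congruent to $\Delta$. Continuity of $\mu_{1}$ under this Hausdorff convergence, obtained by transporting trial functions via the affine maps sending $\Delta$ to $\Omega_{k}$ and invoking dominated convergence together with $K\in L^{1}_{loc}$ and $|\Omega_{k}|=|\Delta|$, then gives $\mu_{1}(\Omega_{k})\to \mu_{1}(\Delta)$; combined with the monotonicity above this yields $\mu_{1}(\Delta)\leq \mu_{1}(\Omega)$. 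The principal obstacle is this last continuity step, which requires a careful transport argument accommodating the mild local singularities of $K$; the underlying geometric convergence $\Omega_{k}\to\Delta$ is classical in the style of P{\'o}lya.
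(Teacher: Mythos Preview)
Your proof is correct and follows essentially the same route as the paper: Jentsch's theorem for positivity of $\mu_{1}$ and $u_{1}$, a Steiner-symmetrization inequality of Brascamp--Lieb--Luttinger type combined with the variational characterization of $\mu_{1}$, and the P{\'o}lya-style convergence of iterated Steiner symmetrizations through the mediators to the equilateral triangle. The only notable differences are that you work directly with the Rayleigh quotient for $\mathcal{K}_{\Omega}$ via the two-factor rearrangement inequality, whereas the paper squares the operator and uses the three-factor version \eqref{nn1}, and that you make the limiting/continuity step explicit while the paper handles it by citation to \cite{He}.
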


\begin{rem}\label{REM:op2}
In other words Theorem \ref{THM:2} says that the operator norm of
$\mathcal{K}_\Omega$ is maximized in an equilateral triangle among all triangles of a given area.
\end{rem}

\begin{proof} [Proof of Theorem \ref{THM:2}]
By Remark \ref{REM:op} the first characteristic number $\mu_{1}$ of the operator
$\mathcal{K}$ is positive and simple; the corresponding
eigenfunction $u_{1}$ can be chosen positive in $\Omega$.
Using the fact that applying a sequence of Steiner symmetrizations
with respect to the mediator of each side, a given triangle transforms into an equilateral one
(see Figure 3.2. in \cite{He}), and the fact that $K(|x-y|)$ is a non-increasing function,
from \eqref{nn1} we obtain
\begin{multline}
\int_{\Omega}\int_{\Omega}\int_{\Omega}
u_{1}(y)K(|y-z|)K(|z-x|)u_{1}(x)dzdydx\leq\\
\int_{\Delta}\int_{\Delta}
\int_{\Delta}u_{1}^{\star}(y)K(|y-z|)K(|z-x|)u_{1}^{\star}(x)dzdydx.\label{nn2}
\end{multline}

Therefore, from \eqref{nn2} and the variational principle for the positive operator $\mathcal{K}^{2}_\Delta$, we get
$$
\mu_{1}^{2}(\Omega)=\frac{\int_{\Omega}|u_{1}(x)|^{2}dx}{\int_{\Omega}\int_{\Omega}\int_{\Omega}
u_{1}(y)K(|y-z|)K(|z-x|)u_{1}(x)dzdydx}\geq$$
$$\frac{\int_{\Delta}|u^{\star}_{1}(x)|^{2}dx}{\int_{\Delta}\int_{\Delta}\int_{\Delta}
u^{\star}_{1}(y)K(|y-z|)K(|z-x|)u^{\star}_{1}(x)dzdydx}\geq
$$
$$
\inf_{v\in
L^{2}(\Delta),v\not\equiv 0}\frac{\int_{\Delta}|v(x)|^{2}dx}{\int_{\Delta}\int_{\Delta}\int_{\Delta}
v(y)K(|y-z|)K(|z-x|)v(x)dzdydx}=\mu_{1}^{2}(\Delta).
$$
Since both $\mu_{1}(\Omega)$ and $\mu_{1}(\Delta)$ are
positive by Remark \ref{REM:op}, we arrive at
$$\mu_{1}(\Omega)\geq \mu_{1}(\Delta)>0$$
for any triangle $\Omega\subset \mathbb R^{2}$ with $|\Omega|=|\Delta|.$
Here we have used the fact that the Steiner symmetrization preserves the $L^{2}$-norm.
The proof is complete.
\end{proof}

\begin{proof} [Proof of Theorem \ref{THM:tri}]
The proof of Theorem \ref{THM:tri} relies on the same technique as Theorem \ref{THM:main} with
the difference that now we use the Steiner symmetrization.
Since the Steiner symmetrization has the same property \eqref{eq16} (see
\cite[Lemma 3.2]{BLL}) as the symmetric-decreasing rearrangement, it is
clear that any Steiner symmetrization increases (or at least does not decrease) the Schatten $p$-norms (cf. Lemma \ref{lem:2}) for integer $p\geq q$.
We only need to recall the fact that by a sequence of Steiner symmetrizations
with respect to the mediator of each side, a given triangle converges to an equilateral one.
The rest of the proof is exactly the same as the proof of Theorem \ref{THM:main}.
\end{proof}


\begin{thebibliography}{AKL92}


\bibitem{BLL}
H.~J. Brascamp, E.~H. Lieb, and J.~M. Luttinger.
\newblock A general rearrangement inequality for multiple integrals.
\newblock {\em J. Funct. Anal.}, 17:227--237, 1974.

\bibitem{Br}
K.~M. Bryan.
\newblock Elementary inversion of the {L}aplace transform.
\newblock Mathematical Sciences Technical Reports (MSTR). Paper 114, 1999.
\newblock {\em http://scholar.rose-hulman.edu/math$_{-}$mstr/114}.

\bibitem{dav}
E.~B. Davies.
\newblock {\em Heat kernels and spectral theory}.
\newblock Cambridge Tracts in Math., 92.
\newblock Cambridge Univ. Press, Cambridge, 1989.

\bibitem{DR-JFA}
J.~Delgado and M.~Ruzhansky. Schatten classes on compact manifolds: Kernel conditions.
{\em J. Funct. Anal.}, 267:772--798, 2014.

\bibitem{DR-CRAS}
J.~Delgado and M.~Ruzhansky. 
Kernel and symbol criteria for Schatten classes and r-nuclearity on compact manifolds.
{\em C. R. Acad. Sci. Paris, Ser. I},  352:779--784, 2014.

\bibitem{GK}
I.~C. Gohberg and M.~G. Kre{\u\i}n.
\newblock {\em Introduction to the theory of linear nonselfadjoint operators}.
\newblock Translated from the Russian by A. Feinstein. Translations of
  Mathematical Monographs, Vol. 18. American Mathematical Society, Providence,
  R.I., 1969.
\bibitem{He}
A.~Henrot.
\newblock {\em Extremum problems for eigenvalues of elliptic operators}.
\newblock Frontiers in Mathematics. Birkh{\"a}user Verlag, Basel, 2006.



\bibitem{LL}
E.~H. Lieb and M.~Loss.
\newblock {\em Analysis}, volume~14 of {\em Graduate Studies in Mathematics}.
\newblock American Mathematical Society, Providence, RI, second edition, 2001.

\bibitem{Lu}
J.~M. Luttinger.
\newblock Generalized isoperimetric inequalities.
\newblock {\em Proc. Nat. Acad. Sci. U.S.A.}, 70:1005--1006, 1973.

\bibitem{Lu1}
J.~M. Luttinger.
\newblock Generalized isoperimetric inequalities.
\newblock {\em J. Math. Phys.}, 14:586--593, 1973.

\bibitem{LF}
J.~M. Luttinger and R. Friedberg.
\newblock A new rearrangement inequality for multiple integrals.
\newblock {\em Arch. Ration. Mech. Anal.}, 61(1):45--64, 1976.

\bibitem{Po}
G.~P{\'o}lya.
\newblock On the characteristic frequencies of a symmetric membrane.
\newblock {\em Math. Z.}, 63:331--337, 1955.

\bibitem{Po1}
G.~P{\'o}lya and G.~Szeg{\"o}.
\newblock {\em Isoperimetric {I}nequalities in {M}athematical {P}hysics}.
\newblock Annals of Mathematics Studies, no. 27. Princeton University Press,
  Princeton, N. J., 1951.

\bibitem{Post:1930}
 E.~Post.
 \newblock Generalized Differentiation.
 \newblock  {\em Trans. Amer. Math. Soc.}, 32:723--781, 1930.

\bibitem{RSV4}
M.~Reed and B.~Simon.
\newblock Methods of Modern Mathematical Physics, Vol. IV: Analysis of Operators.
\newblock Academic Press, 1977.

\bibitem{Ruzhansky-Suragan:Newton}
G.~Rozenblum,  M.~Ruzhansky and D.~Suragan.
\newblock Isoperimetric inequalities for Schatten norms of Riesz potentials.
\newblock {\em J.  Funct. Anal.}, 271:224--239, 2016.

\bibitem{Ruzhansky-Suragan:JMAA}
M.~Ruzhansky and D.~Suragan.
\newblock Isoperimetric inequalities for the logarithmic potential operator.
\newblock {\em J. Math. Anal. Appl.}, 434:1676--1689, 2016.

\bibitem{Ruzhansky-Suragan:UMN}
M.~Ruzhansky and D.~Suragan.
\newblock Schatten's norm for convolution type integral operator.
\newblock {\em Russ. Math. Surv.}, 71:157--158, 2016.

\bibitem{Ruzhansky-Suragan:BMS}
M.~Ruzhansky and D.~Suragan.
\newblock On first and second eigenvalues of Riesz transforms in spherical and hyperbolic geometries.
\newblock {\em  Bull. Math. Sci.},  6:325-334, 2016.


\bibitem{Suragan}
D.~Suragan.
\newblock To a spectral geometry problem of the one-speed particle transfer operator.
\newblock {\em Comp. Math. and Math. Phys.}, 55(5):844--847, 2015.

\bibitem{Vla1}
V.~S. Vladimirov.
\newblock Mathematical problems of one-speed particle transport theory.
\newblock {\em Trudy Mat. Inst.,} 61:3--158, 1961. [in russian]

\bibitem{Vla}
V.~S. Vladimirov.
\newblock {\em Equations of mathematical physics}.
\newblock ``Mir'', Moscow, 1981.
\newblock Translated from the Russian by Eugene Yankovsky [E. Yankovski{\u\i}].

\bibitem{Zoal1}
S.~Zoalroshd. 
A note on isoperimetric inequalities for logarithmic potentials. 
{\em J. Math. Anal. Appl.},  437:1152--1158, 2016.

\bibitem{Zoal2}
S.~Zoalroshd. 
On isoperimetric inequalities for single layer potentials. 
{\em Proc. Amer. Math. Soc.},  145:323--326, 2017.

\end{thebibliography}
\end{document}